\theoremstyle{plain}
\newtheorem{lemma}{Lemma}[section]
\newtheorem{proposition}{Proposition}[section]
\newtheorem{corollary}{Corollary}[section]
\newtheorem*{main}{Main Theorem}
\newtheorem*{A}{Theorem A}
\theoremstyle{definition}
\newtheorem{example}{Example}[section]
\newtheorem{remark}{Remark}[section]
\numberwithin{equation}{section}
\begin{document}

\title[]
{On the closedness of the sum of ranges of operators $A_k$ with almost compact products $A_i^* A_j$}

\author{Ivan S. Feshchenko}

\address{Taras Shevchenko National University of Kyiv,
Faculty of Mechanics and Mathematics, Kyiv, Ukraine}

\email{ivanmath007@gmail.com}

\subjclass[2010]{Primary 46C07; Secondary 47B07.}

\keywords{Sum of operator ranges, sum of subspaces, closedness, essential norm, essential reduced minimum modulus}

\begin{abstract}
Let $\mathcal{H}_1,\ldots,\mathcal{H}_n,\mathcal{H}$ be complex Hilbert spaces and
$A_k:\mathcal{H}_k\to\mathcal{H}$ be a bounded linear operator with the closed range $Ran(A_k)$, $k=1,\ldots,n$.
It is known that if $A_i^*A_j$ is compact for any $i\neq j$, then $\sum_{k=1}^n Ran(A_k)$ is closed.

We show that if all products $A_i^*A_j$, $i\neq j$ are "almost" compact, then the subspaces
$Ran(A_1),\ldots,Ran(A_n)$ are essentially linearly independent and their sum is closed.
\end{abstract}

\maketitle

\section{Introduction}

\subsection{On the closedness of the sum of subspaces}

Let $\mathcal{X}$ be a complex Hilbert space,
and $\mathcal{X}_1,\ldots,\mathcal{X}_n$ be subspaces of $\mathcal{X}$ (by a subspace we always mean a closed linear set).
Define the sum of $\mathcal{X}_1,\ldots,\mathcal{X}_n$ in the natural way, namely,
\begin{equation*}
\sum_{k=1}^n \mathcal{X}_k=\left\{\sum_{k=1}^n x_k\mid x_k\in \mathcal{X}_k,\quad k=1,\ldots,n\right\}.
\end{equation*}
We are interested in the following question:
\begin{center}
\textbf{when $\sum_{k=1}^n \mathcal{X}_k$ is closed?}
\end{center}
If $\mathcal{X}$ if finite dimensional, then, clearly, $\sum_{k=1}^n \mathcal{X}_k$ is closed.
However, in infinite dimensional space $\mathcal{X}$ this is not true (generally speaking).

\begin{example}
Let $\mathcal{Y},\mathcal{Z}$ be Hilbert spaces,
and $A:\mathcal{Y}\to\mathcal{Z}$ be a bounded linear operator with nonclosed $Ran(A)$.
Set $\mathcal{X}=\mathcal{Y}\oplus\mathcal{Z}$.
Define the subspaces
\begin{equation*}
\mathcal{X}_1=\mathcal{Y}\oplus 0=\{(y,0)\mid y\in \mathcal{Y}\},\quad
\mathcal{X}_2=Graph(A)=\{(y,Ay)\mid y\in\mathcal{Y}\}.
\end{equation*}
Then $\mathcal{X}_1+\mathcal{X}_2=\mathcal{Y}\oplus Ran(A)$ is not closed.
\end{example}

Systems of subspaces $\mathcal{X}_1,\ldots,\mathcal{X}_n$ for which the question
\begin{center}
\textbf{is $\sum_{k=1}^n \mathcal{X}_k$ closed?}
\end{center}
is very important arise in various branches of mathematics, for example, in
\begin{enumerate}
\item
theoretical tomography and theory of ridge functions (plane waves).
See, e.g., \cite{Boman84, Petersen79, SheppKruskal78, Pinkus97}, where
the problem on closedness of the sum of spaces of ridge functions (plane waves) is studied;
\item
statistics.
See, e.g., \cite{Bickel91}, where
the closedness of the sum of two marginal subspaces is important for
constructing an efficient estimation of linear functionals of a probability measure with known marginal distributions;
\item
projection algorithms for solving convex feasibility problems
(problems of finding a point in the nonempty intersection of $n$ closed convex sets).
See, e.g., \cite[Theorem 5.19]{BauschkeBorwein96},~\cite[Theorem 4.1]{Badea12},~\cite{Pustylnik12} and references therein;
\item
theory of operator algebras.
See, e.g., \cite{Hartz12}, where
the closedness of finite sums of full Fock spaces over subspaces of $\mathbb{C}^d$ plays a crucial role
for construction of a topological isomorphism between universal operator algebras;
\item
quadratic programming.
See, e.g., \cite{Schochetman01};
\end{enumerate}
and others.

\subsection{On the closedness of the sum of ranges of operators $A_k$ with almost compact products $A_i^* A_j$}

Let $\mathcal{H}_1,\ldots,\mathcal{H}_n,\mathcal{H}$ be complex Hilbert spaces and
$A_k:\mathcal{H}_k\to\mathcal{H}$ be a bounded linear operator with the closed range $Ran(A_k)$, $k=1,\ldots,n$.
We study a question on the closedness of
\begin{equation*}
\sum_{k=1}^n Ran(A_k)=\left\{\sum_{k=1}^n y_k\mid y_k\in Ran(A_k)\right\}=\left\{\sum_{k=1}^n A_k x_k\mid x_k\in\mathcal{H}_k\right\}
\end{equation*}
under additional conditions imposed on $A_k$, $k=1,\ldots,n$.
The following result follows immediately from~\cite[Theorem 2]{Feshchenko14}.

\begin{A}
If $A_i^* A_j$ is compact for any $i\neq j$, then $\sum_{k=1}^n Ran(A_k)$ is closed.
\end{A}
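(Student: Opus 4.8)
The plan is to prove Theorem A by reducing the assertion to a statement about orthogonal projections and then analyzing a single positive operator built from them. Let $M_k=Ran(A_k)$ and let $P_k$ be the orthogonal projection of $\mathcal{H}$ onto $M_k$, $k=1,\dots,n$. The first step is the observation that \emph{$P_iP_j$ is compact for $i\neq j$}. Since $Ran(A_k)$ is closed, the restriction of $A_k$ to $(\ker A_k)^{\perp}$ is a bounded bijection onto $M_k$, hence by the bounded inverse theorem has a bounded inverse $L_k\colon M_k\to\mathcal{H}_k$; setting $G_k=L_kP_k\colon\mathcal{H}\to\mathcal{H}_k$ we obtain a bounded operator with $A_kG_k=P_k$, and therefore $P_k=P_k^{*}=G_k^{*}A_k^{*}$. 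Consequently
\[
 P_iP_j=G_i^{*}A_i^{*}A_jG_j=G_i^{*}\,(A_i^{*}A_j)\,G_j,
\]
which is compact because the middle factor is compact and $G_i^{*},G_j$ are bounded. (This is precisely the place where the closedness of $Ran(A_k)$ is used.)

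Next I would introduce the bounded operator $T\colon M_1\oplus\dots\oplus M_n\to\mathcal{H}$, $T(x_1,\dots,x_n)=x_1+\dots+x_n$, so that $Ran(T)=\sum_{k=1}^n M_k=\sum_{k=1}^n Ran(A_k)$; it remains to show that $Ran(T)$ is closed, and since $Ran(T)$ is closed if and only if $Ran(T^{*}T)$ is closed, it suffices to understand $T^{*}T$. From $T^{*}h=(P_1h,\dots,P_nh)$ one computes directly
\[
 T^{*}T(x_1,\dots,x_n)=\Bigl(\,x_{\ell}+\sum_{k\neq\ell}P_{\ell}x_k\,\Bigr)_{\ell=1}^{n}=(I+R)(x_1,\dots,x_n),
\]
where $R(x_1,\dots,x_n)=\bigl(\sum_{k\neq\ell}P_{\ell}x_k\bigr)_{\ell}$. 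The gain of having passed to projections is that the diagonal part of $T^{*}T$ is now the identity, while $R$ is compact and self-adjoint: for $x_k\in M_k$ and $\ell\neq k$ one has $P_{\ell}x_k=P_{\ell}P_kx_k$, so each block entry of $R$ is a restriction of one of the compact operators $P_{\ell}P_k$, and $R$ is a finite sum of such (composed with the bounded coordinate inclusions and projections); self-adjointness follows since $R=T^{*}T-I$. Hence $\sigma(T^{*}T)=1+\sigma(R)$, and $\sigma(R)$, being the spectrum of a compact self-adjoint operator, is a finite or countable real set whose only possible accumulation point is $0$. Therefore the only possible accumulation point of $\sigma(T^{*}T)$ is $1$; in particular $0$ is either outside $\sigma(T^{*}T)$ or an isolated point of it, which for the positive self-adjoint operator $T^{*}T$ forces $Ran(T^{*}T)$, and hence $Ran(T)=\sum_{k=1}^n Ran(A_k)$, to be closed.

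The main difficulty is conceptual rather than computational: one must first replace the $A_k$ by the projections $P_k$. Working with the $A_k$ directly produces $T^{*}T=D+K$ with $D=\mathrm{diag}(A_1^{*}A_1,\dots,A_n^{*}A_n)$ and $K$ compact, and from $\sigma_{\mathrm{ess}}(T^{*}T)=\sigma_{\mathrm{ess}}(D)$ alone one cannot rule out eigenvalues of $T^{*}T$ accumulating at $0$; the stronger fact $T^{*}T=I+(\text{compact})$, which is exactly what excludes this, becomes available only after the reduction to projections. The subsidiary technical point is the compactness lemma for the mixed products $P_iP_j$, which is where the closed-range hypothesis genuinely enters.
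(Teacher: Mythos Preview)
Your argument is correct. The reduction to projections via $P_iP_j=G_i^*(A_i^*A_j)G_j$, the identity $T^*T=I+R$ with $R$ compact, and the conclusion that $0$ is isolated in (or absent from) $\sigma(T^*T)$ are all sound.

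The paper takes a somewhat different route. It does not pass to the projections $P_k$ but works directly with the operators $A_k$ restricted to $\mathcal{K}_k=\mathcal{H}_k\ominus\ker(A_k)$: one forms $\Gamma(x_1,\dots,x_n)=\sum_k A_kx_k$ on $\bigoplus_k\mathcal{K}_k$ and observes that the block operator $G=\Gamma^*\Gamma$ has diagonal blocks $A_k^*A_k\!\upharpoonright_{\mathcal{K}_k}$ (each bounded below by $\gamma(A_k)^2>0$) and compact off-diagonal blocks. Hence $G=D+K$ with $D\geqslant(\min_k\gamma(A_k)^2)I$ and $K$ compact, so $\sigma_e(G)=\sigma_e(D)\subset[\min_k\gamma(A_k)^2,\infty)$; this already forces $\sigma(G)\cap(0,\varepsilon)=\varnothing$ for some $\varepsilon>0$, and the closedness of $\sum_k Ran(A_k)$ follows from Lemma~\ref{L:closed sum operator ranges} applied to $\Gamma\Gamma^*=\sum_k A_kA_k^*$. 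Your approach buys an identity diagonal and therefore the cleaner statement $T^*T=I+(\text{compact})$, at the cost of the preliminary compactness lemma for $P_iP_j$; the paper's approach avoids that lemma but needs the (equally easy) observation that $D$ is bounded below once the kernels are factored out.

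One remark on your final paragraph: your claim that ``from $\sigma_{\mathrm{ess}}(T^*T)=\sigma_{\mathrm{ess}}(D)$ alone one cannot rule out eigenvalues accumulating at $0$'' is not accurate in the setting that matters. If one works, as the paper does, on $\bigoplus_k(\ker A_k)^\perp$, then $D$ is bounded below by a positive constant, so $\sigma_{\mathrm{ess}}(D)$ is bounded away from $0$; any accumulation of eigenvalues at $0$ would place $0$ in $\sigma_{\mathrm{ess}}(T^*T)$, a contradiction. The obstruction you describe arises only if one neglects to restrict to the orthogonal complements of the kernels, which no reasonable version of the direct argument would do. So the reduction to projections is a pleasant simplification, but not a necessity.
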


We will show that if the essential norms of $A_i^*A_j$, $i\neq j$, are "small enough" (in some sense), then
the subspaces $Ran(A_1),\ldots,Ran(A_n)$ are essentially linearly independent and their sum $\sum_{k=1}^n Ran(A_k)$ is closed.

To formulate our main result we need
\begin{enumerate}
\item
to introduce a notion of the essential linear independence of a system of subspaces;
\item
to introduce a notion of the essential reduced minimum modulus of an operator.
\end{enumerate}
It is by using of the essential reduced minimum modulus of $A_k$, $k=1,\ldots,n$ that
we will specify the meaning of the fuzzy words "small enough".

\subsection{Structure of the paper}
In Section~\ref{S:auxiliary notions} we introduce auxiliary notions, namely,
the essential reduced minimum modulus of an operator and the essential linear independence of subspaces.

In Section~\ref{S:main result and example} we formulate our main result (see the Main Theorem)
and consider an example of its application.

In Section~\ref{S:auxiliary lemmas} we prove two auxiliary lemmas.

Finally, in Section~\ref{S:proof main theorem} we prove the Main Theorem.

\subsection{Notation}

In this paper we consider only complex Hilbert spaces usually denoted by the letters $\mathcal{X},\mathcal{H},\mathcal{K}$.
The scalar product in a Hilbert space is denoted by $\langle\cdot,\cdot\rangle$,
and $\|\cdot\|$ stands for the corresponding norm, $\|x\|^2=\langle x,x\rangle$.
The identity operator on~$\mathcal{H}$ is denoted by~$I_\mathcal{H}$ or simply $I$ if it is clear which Hilbert space is being considered.
For a bounded linear operator $A:\mathcal{H}\to\mathcal{H}$,
$\sigma(A)$ denotes the spectrum of the operator $A$,
and $\sigma_e(A)$ denotes the essential spectrum of $A$.
For a bounded linear operator $A:\mathcal{H}_1\to\mathcal{H}_2$,
\begin{equation*}
\|A\|_e=\inf\{\|A+K\|\mid K:\mathcal{H}_1\to\mathcal{H}_2 \quad\text{is compact}\}
\end{equation*}
is the essential norm of $A$.

\section{Auxiliary notions: the essential reduced minimum modulus and the essential linear independence}
\label{S:auxiliary notions}

\subsection{The essential reduced minimum modulus}

First, let us recall the notion of the \emph{reduced minimum modulus} of an operator.
Let $\mathcal{H}_1,\mathcal{H}_2$ be Hilbert spaces,
$A:\mathcal{H}_1\to\mathcal{H}_2$ be a bounded linear operator, $A\neq 0$.
The reduced minimum modulus of $A$ is defined by
\begin{equation*}
\gamma(A)=\inf\{\|Ax\|\mid x\in\mathcal{H}_1\ominus Ker(A), \|x\|=1\}.
\end{equation*}
In other words, $\gamma(A)$ is the maximum of all $\gamma\geqslant 0$ such that
\begin{equation}\label{eq:rmm 1}
\|Ax\|\geqslant\gamma\|x\|
\end{equation}
for all $x\in\mathcal{H}_1\ominus Ker(A)$.

\begin{remark}
For the zero operator we set $\gamma(0)=+\infty$.
\end{remark}

The reduced minimum modulus possesses the following important property: $Ran(A)$ is closed if and only if $\gamma(A)>0$.

Indeed, if $\gamma(A)>0$, then $A(\mathcal{H}_1\ominus Ker(A))=A(\mathcal{H}_1)=Ran(A)$ is closed.
Conversely, suppose $Ran(A)$ is closed.
Define an operator
\begin{equation*}
A'=A\upharpoonright_{\mathcal{H}_1\ominus Ker(A)}:\mathcal{H}_1\ominus Ker(A)\to Ran(A).
\end{equation*}
Then $A'$ is an invertible operator between two Hilbert spaces.
Consequently, there exists $\gamma>0$ such that $\|A'x\|\geqslant\gamma\|x\|$, $x\in\mathcal{H}_1\ominus Ker(A)$.
This means that $\|Ax\|\geqslant\gamma\|x\|$, $x\in\mathcal{H}_1\ominus Ker(A)$ $\Rightarrow$ $\gamma(A)\geqslant\gamma>0$.

Note that~\eqref{eq:rmm 1} is equivalent to $\|Ax\|^2\geqslant\gamma^2\|x\|^2$.
This inequality can be rewritten as
\begin{equation}\label{eq:rmm 2}
\langle A^*Ax,x\rangle\geqslant\gamma^2\|x\|^2.
\end{equation}
Define a self-adjoint operator
\begin{equation*}
B=A^*A\upharpoonright_{\mathcal{H}_1\ominus Ker(A)}:\mathcal{H}_1\ominus Ker(A)\to\mathcal{H}_1\ominus Ker(A).
\end{equation*}
(Thus, $A^*A=B\oplus 0$ with respect to the orthogonal decomposition $\mathcal{H}=(\mathcal{H}_1\ominus Ker(A))\oplus Ker(A)$.)
Now, from~\eqref{eq:rmm 2} we see that $\gamma(A)$ is the maximum of all $\gamma\geqslant 0$ such that
\begin{equation*}
B\geqslant\gamma^2 I.
\end{equation*}

Now we are ready to introduce, in a natural way, the \emph{essential reduced minimum modulus} of an operator.
The essential reduced minimum modulus of $A$, $\gamma_e(A)$, is the supremum of all $\gamma\geqslant 0$ for which
there exists a compact self-adjoint operator $K:\mathcal{H}_1\ominus Ker(A)\to\mathcal{H}_1\ominus Ker(A)$ such that
\begin{equation*}
B+K\geqslant\gamma^2 I.
\end{equation*}

\begin{remark}
For the zero operator we set $\gamma_e(0)=+\infty$.
\end{remark}

Since the notion of the essential reduced minimum modulus plays a crucial role in this paper,
we present some properties and formulas for $\gamma_e(A)$.

\textbf{1.}
Clearly, $\gamma_e(A)\geqslant\gamma(A)$.
Consequently, if $Ran(A)$ is closed, then $\gamma_e(A)\geqslant\gamma(A)>0$.

\textbf{2.1.}
If $\mathcal{H}_1\ominus Ker(A)$ is finite dimensional, then $\gamma_e(A)=+\infty$.

\textbf{2.2.}
Suppose $\mathcal{H}_1\ominus Ker(A)$ is infinite dimensional.
Then
\begin{equation}\label{eq:ermm 1}
\gamma_e(A)=(\min\{\lambda\mid\lambda\in\sigma_e(B)\})^{1/2}.
\end{equation}
This follows from the following simple proposition.

\begin{proposition}
Let $C:\mathcal{H}\to\mathcal{H}$ be a bounded self-adjoint operator in an infinite dimensional Hilbert space $\mathcal{H}$.
Define $m_e(C)$ to be the supremum of all $m$ for which there exists a compact self-adjoint operator $K:\mathcal{H}\to\mathcal{H}$ such that
\begin{equation*}
C+K\geqslant mI.
\end{equation*}
Then $m_e(C)=\min\{\lambda\mid\lambda\in\sigma_e(C)\}$.
\end{proposition}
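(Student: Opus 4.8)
The plan is to reduce the statement to the standard spectral characterization of the essential spectrum of a bounded self-adjoint operator. First I would recall that, since $C$ is self-adjoint, $\min\sigma(C)$ is the bottom of the spectrum, and the essential spectrum $\sigma_e(C)$ is the set of points in $\sigma(C)$ that are not isolated eigenvalues of finite multiplicity; equivalently, by Weyl's criterion, $\lambda\in\sigma_e(C)$ if and only if there is an orthonormal sequence $(x_k)$ with $\|(C-\lambda I)x_k\|\to 0$. Denote $\mu=\min\{\lambda\mid\lambda\in\sigma_e(C)\}$ (this minimum exists because $\sigma_e(C)$ is a nonempty closed bounded subset of $\mathbb{R}$; nonemptiness uses that $\mathcal{H}$ is infinite dimensional). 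I will prove $m_e(C)\geqslant\mu$ and $m_e(C)\leqslant\mu$ separately.

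For the inequality $m_e(C)\geqslant\mu$: I would use the spectral theorem for $C$. Let $E(\cdot)$ be the spectral measure of $C$. Since $\mu=\min\sigma_e(C)$, the part of the spectrum of $C$ strictly below $\mu$ consists only of isolated eigenvalues of finite multiplicity, so for any $\varepsilon>0$ the spectral projection $P=E((-\infty,\mu-\varepsilon))$ has finite rank. Set $K=-CP$ (equivalently, modify $C$ on the finite-dimensional range of $P$). Then $K$ is compact and self-adjoint, and on the complementary subspace $C\geqslant(\mu-\varepsilon)I$, so one checks $C+K\geqslant(\mu-\varepsilon)I$ on all of $\mathcal{H}$. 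Hence $m_e(C)\geqslant\mu-\varepsilon$ for every $\varepsilon>0$, giving $m_e(C)\geqslant\mu$.

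For the inequality $m_e(C)\leqslant\mu$: suppose $K$ is compact self-adjoint with $C+K\geqslant mI$. Since $\mu\in\sigma_e(C)$, Weyl's criterion gives an orthonormal sequence $(x_k)$ with $\|(C-\mu I)x_k\|\to 0$, and since $K$ is compact, $\|Kx_k\|\to 0$ (a compact operator maps a weakly null sequence, in particular an orthonormal sequence, to a norm-null sequence). Then $\langle(C+K)x_k,x_k\rangle=\langle Cx_k,x_k\rangle+\langle Kx_k,x_k\rangle\to\mu$, because $\langle Cx_k,x_k\rangle=\mu+\langle(C-\mu I)x_k,x_k\rangle\to\mu$ and $|\langle Kx_k,x_k\rangle|\leqslant\|Kx_k\|\to 0$. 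On the other hand $\langle(C+K)x_k,x_k\rangle\geqslant m$ for all $k$, so $m\leqslant\mu$. Taking the supremum over all admissible $m$ yields $m_e(C)\leqslant\mu$, and combining the two inequalities completes the proof.

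The routine facts being invoked are Weyl's criterion for the essential spectrum, the fact that compact operators send orthonormal (weakly null) sequences to norm-null sequences, and the finite-rank property of spectral projections onto the region below the essential spectrum. The only mild subtlety — the "main obstacle" if any — is the bookkeeping in the first inequality: one must be careful that $E((-\infty,\mu-\varepsilon))$ genuinely has finite rank, which holds precisely because every point of $\sigma(C)$ strictly less than $\mu=\min\sigma_e(C)$ is an isolated eigenvalue of finite multiplicity and there can be only finitely many such points in the interval $(-\infty,\mu-\varepsilon)$ (an accumulation point of eigenvalues would itself lie in $\sigma_e(C)$, contradicting minimality of $\mu$). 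Everything else is a direct computation with the spectral theorem.
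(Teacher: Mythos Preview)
The paper does not actually prove this proposition; it simply calls it a ``simple proposition'' and uses it. Your argument is the standard one and is correct in outline, but there is a small slip in the construction for the inequality $m_e(C)\geqslant\mu$. With $K=-CP$ one gets $C+K=C(I-P)$, which is $0$ on the range of $P$; hence $C+K\geqslant(\mu-\varepsilon)I$ fails on $Ran(P)$ whenever $\mu-\varepsilon>0$. The fix is immediate: take instead $K=\bigl((\mu-\varepsilon)I-C\bigr)P$ (or simply add a sufficiently large multiple of $P$), which is still finite-rank and self-adjoint since $C$ commutes with $P$. Then $C+K=(\mu-\varepsilon)I$ on $Ran(P)$ and $C+K=C\geqslant(\mu-\varepsilon)I$ on $Ran(I-P)$, so the desired inequality holds on all of $\mathcal{H}$. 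With this correction your proof is complete; the second inequality via a Weyl sequence and the fact that compact operators kill orthonormal sequences is fine as written.
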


Thus,
\begin{equation*}
(\gamma_e(A))^2=m_e(B)=\min\{\lambda\mid\lambda\in\sigma_e(B)\};
\end{equation*}
it follows~\eqref{eq:ermm 1}.

Now suppose that $Ran(A)$ is closed.
Then $\gamma_e(A)>0$ and $\sigma_e(B)\subset[(\gamma_e(A))^2,+\infty)$.
Hence, from~\eqref{eq:ermm 1} it follows that
\begin{equation*}
\gamma_e(A)=(\min\{\lambda\mid\lambda\in\sigma_e(A^*A)\setminus\{0\}\})^{1/2}.
\end{equation*}

\textbf{3.}
If for $\gamma\geqslant 0$ there exists a compact operator $T:\mathcal{H}_1\ominus Ker(A)\to\mathcal{H}_2$ such that
\begin{equation}\label{eq:ermm 2}
\|Ax\|^2+\|Tx\|^2\geqslant\gamma^2\|x\|^2
\end{equation}
for all $x\in\mathcal{H}_1\ominus Ker(A)$,
then $\gamma_e(A)\geqslant\gamma$.
Indeed,~\eqref{eq:ermm 2} can be rewritten as
\begin{equation*}
\langle (A^*A+T^*T)x,x\rangle\geqslant\gamma^2\|x\|^2;
\end{equation*}
hence,
\begin{equation*}
B+T^*T\geqslant\gamma^2 I.
\end{equation*}
Since $T^*T$ is compact and self-adjoint, we conclude that $\gamma_e(A)\geqslant\gamma$.

\begin{remark}
Suppose that $\dim\mathcal{H}_1\leqslant\dim\mathcal{H}_2$.
(Here $\dim\mathcal{H}$ is the Hilbert dimension of $\mathcal{H}$, i.e.,
$\dim\mathcal{H}$ is the cardinality of an orthonormal basis of $\mathcal{H}$.
This inequality holds in the most interesting case when $\mathcal{H}_1,\mathcal{H}_2$ are separable and infinite dimensional.)
Then $\gamma_e(A)$ is the supremum of all $\gamma\geqslant 0$
for which there exists a compact operator $T:\mathcal{H}_1\ominus Ker(A)\to\mathcal{H}_2$
such that~\eqref{eq:ermm 2} holds for all $x\in\mathcal{H}_1\ominus Ker(A)$.

This follows from the arguments above and the fact that every compact self-adjoint operator $K:\mathcal{H}_1\ominus Ker(A)\to\mathcal{H}_1\ominus Ker(A)$
can be represented as $K=T^*T$ for some compact $T:\mathcal{H}_1\ominus Ker(A)\to\mathcal{H}_2$.
\end{remark}

\subsection{An essential linear independence}

Let $\mathcal{X}$ be a Hilbert space, $\mathcal{X}_1,\ldots,\mathcal{X}_n$ be its subspaces.
We say that $\mathcal{X}_1,\ldots,\mathcal{X}_n$ are linearly independent if the equality
\begin{equation*}
\sum_{i=1}^n x_i=0,
\end{equation*}
where $x_i\in\mathcal{X}_i$, $i=1,\ldots,n$ implies that $x_i=0$, $i=1,\ldots,n$.
Clearly, $\mathcal{X}_1,\ldots,\mathcal{X}_n$ are linearly independent if and only if
\begin{equation*}
\mathcal{X}_i\cap\sum_{j\neq i}\mathcal{X}_j=\{0\}
\end{equation*}
for $i=1,\ldots,n$.

Now, it is natural to say that $\mathcal{X}_1,\ldots,\mathcal{X}_n$ are essentially linearly independent if
the linear set $\mathcal{X}_i\cap\sum_{j\neq i}\mathcal{X}_j$ is finite dimensional for $i=1,\ldots,n$.

\begin{remark}
It is useful to reformulate the properties of linear independence and essential linear independence it terms of properties of
the operator $S:\mathcal{X}_1\oplus\ldots\oplus\mathcal{X}_n\to\mathcal{X}$ defined by
\begin{equation*}
S(x_1,\ldots,x_n)=\sum_{i=1}^n x_i,\quad x_i\in\mathcal{X}_i,\quad i=1,\ldots,n.
\end{equation*}
Clearly,
\begin{enumerate}
\item
$\mathcal{X}_1,\ldots,\mathcal{X}_n$ are linearly independent $\Leftrightarrow$ $Ker(S)=\{0\}$;
\item
$\mathcal{X}_1,\ldots,\mathcal{X}_n$ are essentially linearly independent $\Leftrightarrow$ $Ker(S)$ is finite dimensional.
\end{enumerate}
\end{remark}

The essentially linearly independent systems of subspaces can be regarded as
a finite dimensional perturbation of the linearly independent systems of subspaces.
More precisely, we have
\begin{enumerate}
\item
if $\mathcal{X}_i=\mathcal{Y}_i+\mathcal{Z}_i$, $i=1,\ldots,n$, where
subspaces $\mathcal{Y}_1,\ldots,\mathcal{Y}_n$ are linearly independent and
subspaces $\mathcal{Z}_1,\ldots,\mathcal{Z}_n$ are finite dimensional, then
$\mathcal{X}_1,\ldots,\mathcal{X}_n$ are essentially linearly independent;
\item
if $\mathcal{X}_1,\ldots,\mathcal{X}_n$ are essentially linearly independent, then
there exists a representation
$\mathcal{X}_i=\mathcal{Y}_i\oplus\mathcal{Z}_i$, $i=1,\ldots,n$, where
the subspaces $\mathcal{Y}_1,\ldots,\mathcal{Y}_n$ are linearly independent and
the subspaces $\mathcal{Z}_1,\ldots,\mathcal{Z}_n$ are finite dimensional.
\end{enumerate}

To prove (1), note that
\begin{equation*}
\dim \left(\mathcal{X}_i\cap\sum_{j\neq i}\mathcal{X}_j\right)\leqslant\dim\mathcal{Z}_i+\dim\sum_{j\neq i}\mathcal{Z}_j.
\end{equation*}

To prove (2), it is sufficient to define
\begin{equation*}
\mathcal{Z}_i=\mathcal{X}_i\cap\sum_{j\neq i}\mathcal{X}_j,\quad
\mathcal{Y}_i=\mathcal{X}_i\ominus\mathcal{Z}_i
\end{equation*}
for $i=1,\ldots,n$.

\section{The main result and an example of its application}\label{S:main result and example}

\subsection{The main result}

Let $\mathcal{H}_1,\ldots,\mathcal{H}_n,\mathcal{H}$ be complex Hilbert spaces and
$A_k:\mathcal{H}_k\to\mathcal{H}$ be a bounded linear operator with the closed range $Ran(A_k)$, $k=1,\ldots,n$.

Suppose that
\begin{enumerate}
\item
$\gamma_e(A_k)\geqslant\gamma_k>0$, $k=1,\ldots,n$;
\item
$\|A_i^*A_j\|_e\leqslant\varepsilon_{i,j}$ for $i\neq j$.
\end{enumerate}
In what follows, we assume that $\varepsilon_{i,j}=\varepsilon_{j,i}$ for all $i\neq j$.
Define a real symmetric $n\times n$ matrix $M=(m_{i,j})$ by
\begin{equation*}
m_{i,j}=\begin{cases}
\gamma_i^2,   &\text{if $i=j$;}\\
-\varepsilon_{i,j},&\text{if $i\neq j$.}
\end{cases}
\end{equation*}

\begin{main}
If $M$ is positive definite, then $Ran(A_1),\ldots,Ran(A_n)$ are essentially linearly independent and their sum is closed.
\end{main}

Note, that if
\begin{equation}\label{eq:diagonally dominant}
\sum_{j\neq i}\varepsilon_{i,j}<\gamma_i^2
\end{equation}
for $i=1,\ldots,n$, then $M$ is a strictly diagonally dominant matrix;
consequently, $M$ is positive definite.
Hence, we get the following corollary of the Main Theorem.

\begin{corollary}
If~\eqref{eq:diagonally dominant} holds for $i=1,\ldots,n$, then
$Ran(A_1),\ldots,Ran(A_n)$ are essentially linearly independent and their sum is closed.
\end{corollary}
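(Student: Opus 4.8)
The plan is to reduce the statement to a quantitative estimate on the operator $S:\mathrm{Ran}(A_1)\oplus\cdots\oplus\mathrm{Ran}(A_n)\to\mathcal{H}$, $S(y_1,\ldots,y_n)=\sum_k y_k$, by showing that $\|S(y_1,\ldots,y_n)\|^2$ is bounded below, up to a compact correction, by a constant multiple of $\sum_k\|y_k\|^2$. Concretely, I would first pass from the ranges to the operators: writing $y_k=A_kx_k$ with $x_k\in\mathcal{H}_k\ominus\mathrm{Ker}(A_k)$ and using property \textbf{3} of the essential reduced minimum modulus, for each $\delta>0$ there is a compact $T_k$ with $\|A_kx_k\|^2+\|T_kx_k\|^2\geqslant(\gamma_k-\delta)^2\|x_k\|^2$. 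Thus controlling $\sum_k\|y_k\|^2$ from below by $\sum_k\|x_k\|^2$ costs only a compact perturbation, and it suffices to bound $\|\sum_kA_kx_k\|^2$ below by a positive multiple of $\sum_k\|x_k\|^2$ modulo compacts.

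Next I would expand
\begin{equation*}
\Bigl\|\sum_{k=1}^nA_kx_k\Bigr\|^2=\sum_{k=1}^n\|A_kx_k\|^2+\sum_{i\neq j}\langle A_jx_j,A_ix_i\rangle
=\sum_{k=1}^n\|A_kx_k\|^2+\sum_{i\neq j}\langle A_i^*A_jx_j,x_i\rangle.
\end{equation*}
For the cross terms, the hypothesis $\|A_i^*A_j\|_e\leqslant\varepsilon_{i,j}$ means there are compact $K_{i,j}$ with $\|A_i^*A_j+K_{i,j}\|\leqslant\varepsilon_{i,j}+\delta$ (for any fixed $\delta>0$), so that $|\langle A_i^*A_jx_j,x_i\rangle|\leqslant(\varepsilon_{i,j}+\delta)\|x_i\|\,\|x_j\|$ plus a term $|\langle K_{i,j}x_j,x_i\rangle|$. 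This last piece I would absorb into the compact bookkeeping: collecting all the $K_{i,j}$ and the $T_k$ into one compact operator $\mathcal{K}$ on the direct sum, I can arrange that $\|\sum_kA_kx_k\|^2+\|\mathcal{K}(x_1,\ldots,x_n)\|^2$ dominates
\begin{equation*}
\sum_{k=1}^n(\gamma_k-\delta)^2\|x_k\|^2-\sum_{i\neq j}(\varepsilon_{i,j}+\delta)\|x_i\|\,\|x_j\|
=v^{\top}M_\delta\,v,
\end{equation*}
where $v=(\|x_1\|,\ldots,\|x_n\|)^{\top}$ and $M_\delta$ is the matrix $M$ with $\gamma_k^2$ replaced by $(\gamma_k-\delta)^2$ and $\varepsilon_{i,j}$ by $\varepsilon_{i,j}+\delta$. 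Since positive definiteness is an open condition and $M$ is positive definite, for $\delta$ small enough $M_\delta$ is still positive definite, hence $v^{\top}M_\delta v\geqslant c(\delta)\|v\|^2=c(\delta)\sum_k\|x_k\|^2$ with $c(\delta)>0$; combined with $\sum_k\|x_k\|^2\gtrsim\sum_k\|A_kx_k\|^2=\sum_k\|y_k\|^2$ (using that $\|A_k\|$ is finite), this yields $\|S(\vec y)\|^2+\|\widetilde{\mathcal{K}}\vec y\|^2\geqslant c'\|\vec y\|^2$ for a compact $\widetilde{\mathcal{K}}$ and $c'>0$.

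Finally I would invoke the operator-theoretic characterization: the estimate just obtained says exactly that $\gamma_e(S)\geqslant\sqrt{c'}>0$ (again via property \textbf{3}), so in particular $\gamma_e(S)>0$. From $(\gamma_e(S))^2=\min\sigma_e(S^*S)$ being positive we get that $0$ is at worst an isolated eigenvalue of finite multiplicity of $S^*S$, i.e. $\mathrm{Ker}(S)=\mathrm{Ker}(S^*S)$ is finite dimensional and $\mathrm{Ran}(S^*S)$ — hence $\mathrm{Ran}(S)=\sum_k\mathrm{Ran}(A_k)$ — is closed; finite-dimensionality of $\mathrm{Ker}(S)$ is precisely essential linear independence of the $\mathrm{Ran}(A_k)$ by the Remark. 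I expect the main obstacle to be the careful handling of the compact terms: one must be sure that the operators $K_{i,j}$ depending on $\delta$, the compression to $\mathcal{H}_k\ominus\mathrm{Ker}(A_k)$, and the passage between the $x_k$-norms and the $y_k$-norms all assemble into a single genuinely compact operator on the direct sum, and that the ``$\varepsilon\|x_i\|\|x_j\|$ plus compact'' splitting of the cross terms is done uniformly — this is where an auxiliary lemma (presumably one of the two promised in Section~\ref{S:auxiliary lemmas}) is likely needed.
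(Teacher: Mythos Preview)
Your proposal is correct and follows essentially the same route as the paper. Note that the paper deduces the Corollary in one line from the Main Theorem (strict diagonal dominance makes $M$ positive definite); what you have sketched is in effect a direct proof of the Main Theorem, via the very same quadratic-form lower bound on $\Gamma^*\Gamma$ that the paper isolates as Lemma~\ref{L:ess spectrum block operator}. The only organizational differences are that the paper works on $\bigoplus_k(\mathcal{H}_k\ominus\mathrm{Ker}(A_k))$ rather than on $\bigoplus_k\mathrm{Ran}(A_k)$ (which spares your back-and-forth between the $x_k$- and $y_k$-norms), and for closedness of the range it transfers from $\sigma_e(\Gamma^*\Gamma)$ to $\sigma(\Gamma\Gamma^*)=\sigma(\sum_k A_kA_k^*)$ and invokes Lemma~\ref{L:closed sum operator ranges}, instead of reading it off from $\gamma_e(S)>0$ as you do.
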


\subsection{Example}

Let $\mathcal{X}$ be a Hilbert space, and $\mathcal{X}_1,\ldots,\mathcal{X}_n$ be its subspaces.
Using the Main Theorem, we will obtain sufficient conditions under which
$\mathcal{X}_1,\ldots,\mathcal{X}_n$ are essentially linearly independent and their sum is closed.

For a subspace $\mathcal{Y}$ of $\mathcal{X}$, define $P_{\mathcal{Y}}$ to be the orthogonal projection onto $\mathcal{Y}$.
Clearly, $Ran(P_{\mathcal{Y}})=\mathcal{Y}$ and
\begin{equation*}
\gamma_e(P_{\mathcal{Y}})=
\begin{cases}
1,      &\text{if $\mathcal{Y}$ is infinite dimensional;}\\
+\infty,&\text{if $\mathcal{Y}$ is finite dimensional.}
\end{cases}
\end{equation*}
We apply Main Theorem to the operators $P_{\mathcal{X}_k}:\mathcal{X}\to\mathcal{X}$.
Suppose that numbers $\varepsilon_{i,j}=\varepsilon_{j,i}$, $i\neq j$ are such that
$\|P_{\mathcal{X}_i}P_{\mathcal{X}_j}\|_e\leqslant\varepsilon_{i,j}$ for $i\neq j$.
Define a real symmetric $n\times n$ matrix $M=(m_{i,j})$ by
\begin{equation*}
m_{i,j}=\begin{cases}
1,   &\text{if $i=j$;}\\
-\varepsilon_{i,j},&\text{if $i\neq j$.}
\end{cases}
\end{equation*}

By the Main Theorem, if $M$ is positive definite, then
$\mathcal{X}_1,\ldots,\mathcal{X}_n$ are essentially linearly independent and their sum is closed.
In particular, if $\sum_{j\neq i}\varepsilon_{i,j}<1$ for any $i=1,\ldots,n$, then
$\mathcal{X}_1,\ldots,\mathcal{X}_n$ are essentially linearly independent and their sum is closed.

\section{Auxiliary lemmas}\label{S:auxiliary lemmas}

\subsection{On the closedness of the sum of operator ranges}

Let $\mathcal{H}_1,\ldots,\mathcal{H}_n,\mathcal{H}$ be Hilbert spaces,
$A_k:\mathcal{H}_k\to\mathcal{H}$ be a bounded linear operator, $k=1,\ldots,n$.

\begin{lemma}\label{L:closed sum operator ranges}
If there exists $\varepsilon>0$ such that
\begin{equation*}
\sigma(\sum_{k=1}^n A_k A_k^*)\cap(0,\varepsilon)=\varnothing
\end{equation*}
then $\sum_{k=1}^n Ran(A_k)$ is closed.
\end{lemma}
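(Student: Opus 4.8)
The plan is to relate the operator $T=\sum_{k=1}^n A_k A_k^*:\mathcal{H}\to\mathcal{H}$ to the map $B:\mathcal{H}_1\oplus\cdots\oplus\mathcal{H}_n\to\mathcal{H}$ given by $B(x_1,\ldots,x_n)=\sum_{k=1}^n A_k x_k$, whose range is exactly $\sum_{k=1}^n Ran(A_k)$. Indeed $B=(A_1,\ldots,A_n)$ acting on the direct sum, and its adjoint is $B^*y=(A_1^*y,\ldots,A_n^*y)$, so $BB^*=\sum_{k=1}^n A_k A_k^*=T$. The key general fact I would invoke is that for any bounded operator $B$ between Hilbert spaces, $Ran(B)$ is closed if and only if $Ran(BB^*)$ is closed, equivalently if and only if $0$ is not an accumulation point of $\sigma(BB^*)$; and $Ran(B)$ is closed iff $Ran(B^*)$ is closed. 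So it suffices to show $Ran(T)$ is closed.

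The main step is therefore: show that the spectral gap hypothesis $\sigma(T)\cap(0,\varepsilon)=\varnothing$ forces $Ran(T)$ closed. Since $T$ is self-adjoint and positive, write $\mathcal{H}=Ker(T)\oplus\overline{Ran(T)}$ (the latter being the reducing subspace $(Ker\,T)^\perp$). On $\overline{Ran(T)}$ the restriction $T'$ of $T$ is self-adjoint with $\sigma(T')\subseteq\{0\}\cup\sigma(T)$, and $0$ is not an eigenvalue of $T'$. The hypothesis says $\sigma(T')\cap(0,\varepsilon)=\varnothing$; I claim then $0\notin\sigma(T')$, i.e. $T'$ is invertible on $\overline{Ran(T)}$. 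If $0$ were in $\sigma(T')$, since $0$ is not an eigenvalue it would be an approximate eigenvalue, giving unit vectors $u_m\in\overline{Ran(T)}$ with $T'u_m\to 0$; but by the spectral theorem for the positive operator $T'$, $\|T'u_m\|^2=\int_{\sigma(T')}\lambda^2\,d\langle E_\lambda u_m,u_m\rangle\geqslant\varepsilon^2$ because the spectral measure of $u_m$ is supported on $\{0\}\cup[\varepsilon,\infty)$ and $T'$ has no kernel in $\overline{Ran(T)}$ — more carefully, the spectral projection of $T'$ onto $[0,\varepsilon)$ is a projection onto $Ker(T')=\{0\}$, hence zero, so the spectral measure lives on $[\varepsilon,\infty)$ and $\|T'u_m\|\geqslant\varepsilon\|u_m\|=\varepsilon$, a contradiction. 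Thus $T'$ is boundedly invertible, $Ran(T)=Ran(T')=\overline{Ran(T)}$ is closed.

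Finally, with $Ran(T)=Ran(BB^*)$ closed, the standard lemma (e.g. via the polar decomposition or the identity $\|B^*y\|^2=\langle BB^*y,y\rangle$) gives that $Ran(B^*)$ is closed, hence $Ran(B)=\sum_{k=1}^n Ran(A_k)$ is closed, as desired. I expect the only genuinely delicate point to be the argument that a spectral gap immediately to the right of $0$, combined with $0$ not being an eigenvalue of the compression to $(Ker\,T)^\perp$, rules out $0$ from the spectrum of that compression; once that is in hand the rest is the routine "$Ran(B)$ closed $\iff Ran(BB^*)$ closed" package.
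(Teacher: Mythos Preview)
Your proof is correct and follows essentially the same route as the paper's: both reduce to the restriction of $T=\sum_k A_kA_k^*$ to $(\ker T)^\perp=\overline{\sum_k Ran(A_k)}$, use the spectral gap together with triviality of the kernel there to get $T'\geqslant\varepsilon I$, and then read off closedness of $\sum_k Ran(A_k)$. The only cosmetic difference is that you package the last step via the operator $B$ and the standard equivalence ``$Ran(B)$ closed $\Leftrightarrow Ran(BB^*)$ closed'', whereas the paper writes out the surjectivity of $\sum_k A_k'(A_k')^*$ on $\overline{\mathcal{R}}$ directly.
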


\begin{remark}
The converse is also true.
However, this fact is not needed in the paper.
\end{remark}

\begin{proof}
Define $\mathcal{R}=\sum_{k=1}^n Ran(A_k)$ and consider the orthogonal decomposition
\begin{equation*}
\mathcal{H}=\overline{\mathcal{R}}\oplus(\mathcal{H}\ominus\overline{\mathcal{R}}).
\end{equation*}
With respect to this orthogonal decomposition
\begin{equation*}
A_k=\begin{pmatrix}
A_k'\\
0
\end{pmatrix},
\end{equation*}
where $A_k':\mathcal{H}_k\to\overline{\mathcal{R}}$, $A_k'$ is
$A_k$ considered as the operator from $\mathcal{H}_k$ to $\overline{\mathcal{R}}$.
Then $A_k^*=((A_k')^*\quad 0)$.
Hence, $\sum_{k=1}^n A_k A_k^*=(\sum_{k=1}^n A_k'(A_k')^*)\oplus 0$.
We have
\begin{align*}
&Ker(\sum_{k=1}^n A_k'(A_k')^*)=\bigcap_{k=1}^n Ker((A_k')^*)=\\
&=\bigcap_{k=1}^n\{x\in\overline{\mathcal{R}}\mid x\bot Ran(A_k')\}=\bigcap_{k=1}^n\{x\in\overline{\mathcal{R}}\mid x\bot Ran(A_k)\}=\\
&=\{x\in\overline{\mathcal{R}}\mid x\bot\mathcal{R}\}=\{0\}.
\end{align*}
Moreover, $\sigma(\sum_{k=1}^n A_k'(A_k')^*)\cap(0,\varepsilon)=\varnothing$.
Conclusion: $\sum_{k=1}^n A_k'(A_k')^*\geqslant\varepsilon I$.
It follows that $Ran(\sum_{k=1}^n A_k'(A_k')^*)=\overline{\mathcal{R}}$.
Consequently, $\sum_{k=1}^n Ran(A_k')=\overline{\mathcal{R}}$.
Hence, $\sum_{k=1}^n Ran(A_k)=\overline{\mathcal{R}}$ is closed as required.
\end{proof}

\subsection{On the essential spectrum of block operators}

Let $\mathcal{H}_1,\ldots,\mathcal{H}_n$ be Hilbert spaces,
$A:\mathcal{H}_1\oplus\ldots\oplus\mathcal{H}_n\to\mathcal{H}_1\oplus\ldots\oplus\mathcal{H}_n$ be a bounded self-adjoint operator.
Let
\begin{equation*}
A=(A_{i,j}\mid i,j=1,\ldots,n)
\end{equation*}
be the block decomposition of $A$.
Suppose that reals $a_i$, $i=1,\ldots,n$ and $a_{i,j}=a_{j,i}$, $i\neq j$, satisfy the following conditions:
\begin{enumerate}
\item
$m_e(A_{i,i})\geqslant a_i$ for $i=1,\ldots,n$;
\item
$\|A_{i,j}\|_e\leqslant a_{i,j}$ for any $i\neq j$.
\end{enumerate}

(Recall that for a bounded self-adjoint operator $C$,
$m_e(C)$ is the supremum of all $m$ for which there exists a compact self-adjoint operator $K$ such that $C+K\geqslant mI$.)

Define a real symmetric $n\times n$ matrix $M=(m_{i,j})$ by
\begin{equation*}
m_{i,j}=\begin{cases}
a_i,   &\text{if $i=j$;}\\
-a_{i,j},&\text{if $i\neq j$.}
\end{cases}
\end{equation*}

\begin{lemma}\label{L:ess spectrum block operator}
$\sigma_e(A)\subset[\lambda_{min}(M),+\infty)$, where
$\lambda_{min}(M)$ is the minimum eigenvalue of $M$.
\end{lemma}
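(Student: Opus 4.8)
The plan is to reduce the inclusion $\sigma_e(A)\subset[\lambda_{min}(M),+\infty)$ to a purely finite-dimensional eigenvalue inequality. If $\mathcal{H}:=\mathcal{H}_1\oplus\ldots\oplus\mathcal{H}_n$ is finite dimensional the claim is vacuous, so assume $\mathcal{H}$ is infinite dimensional. By the Proposition in Section~\ref{S:auxiliary notions} applied to $C=A$, $m_e(A)=\min\{\lambda\mid\lambda\in\sigma_e(A)\}$, so the desired inclusion is equivalent to $m_e(A)\geqslant\lambda_{min}(M)$. Recalling the definition of $m_e$, it therefore suffices to produce, for every $\delta>0$, a compact self-adjoint operator $K$ on $\mathcal{H}$ with $A+K\geqslant\mu(\delta)I$, where $\mu(\delta)\to\lambda_{min}(M)$ as $\delta\to 0$.

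First I would construct $K$ blockwise. Fix $\delta>0$. By condition (1) and the definition of $m_e(A_{i,i})$, choose a compact self-adjoint $K_i:\mathcal{H}_i\to\mathcal{H}_i$ with $A_{i,i}+K_i\geqslant(a_i-\delta)I$. By condition (2), for $i<j$ choose a compact $T_{i,j}:\mathcal{H}_j\to\mathcal{H}_i$ with $\|A_{i,j}-T_{i,j}\|\leqslant a_{i,j}+\delta$, and set $T_{j,i}=T_{i,j}^*$; since $A$ is self-adjoint, $A_{j,i}=A_{i,j}^*$, so $\|A_{j,i}-T_{j,i}\|\leqslant a_{i,j}+\delta$ as well. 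Let $K$ be the operator whose $(i,i)$ block is $K_i$ and whose $(i,j)$ block, for $i\neq j$, is $-T_{i,j}$. Then $K$ is compact, being a finite sum of compact blocks, and self-adjoint: the diagonal blocks are self-adjoint, and the $(j,i)$ block $-T_{j,i}=-T_{i,j}^*$ is the adjoint of the $(i,j)$ block $-T_{i,j}$.

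Next I would estimate the quadratic form of $\widetilde A:=A+K$, whose blocks are $A_{i,i}+K_i$ on the diagonal and $A_{i,j}-T_{i,j}$ off it. For $x=(x_1,\ldots,x_n)\in\mathcal{H}$,
\begin{align*}
\langle\widetilde A x,x\rangle
&=\sum_{i=1}^n\langle(A_{i,i}+K_i)x_i,x_i\rangle+\sum_{i\neq j}\langle(A_{i,j}-T_{i,j})x_j,x_i\rangle\\
&\geqslant\sum_{i=1}^n(a_i-\delta)\|x_i\|^2-\sum_{i\neq j}(a_{i,j}+\delta)\|x_i\|\,\|x_j\|,
\end{align*}
the inequality using $A_{i,i}+K_i\geqslant(a_i-\delta)I$ and the Cauchy--Schwarz estimate of the cross terms. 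Writing $\xi=(\|x_1\|,\ldots,\|x_n\|)\in\mathbb{R}^n$ (so $\|\xi\|=\|x\|$) and letting $M_\delta$ be the real symmetric matrix with diagonal entries $a_i-\delta$ and off-diagonal entries $-(a_{i,j}+\delta)$, the last expression equals $\langle M_\delta\xi,\xi\rangle\geqslant\lambda_{min}(M_\delta)\|\xi\|^2=\lambda_{min}(M_\delta)\|x\|^2$. Hence $A+K\geqslant\lambda_{min}(M_\delta)I$. Since $M_\delta\to M$ entrywise as $\delta\to 0$ and the eigenvalues of a real symmetric matrix depend continuously on its entries, $\lambda_{min}(M_\delta)\to\lambda_{min}(M)$; taking $\mu(\delta)=\lambda_{min}(M_\delta)$ finishes the argument.

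The bulk of this is routine; the two points needing care are keeping $K$ self-adjoint (which forces $T_{j,i}=T_{i,j}^*$ and uses $A_{j,i}=A_{i,j}^*$) and the final passage $\delta\to0$ via continuity of eigenvalues. The conceptual step I would single out is the reduction, through the vector $\xi$ of block-norms, from a lower bound on $\widetilde A$ over $\mathcal{H}$ to the finite-dimensional bound $\langle M_\delta\xi,\xi\rangle\geqslant\lambda_{min}(M_\delta)\|\xi\|^2$; everything else is bookkeeping.
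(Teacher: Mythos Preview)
Your argument is correct and follows essentially the same route as the paper: construct a compact self-adjoint block perturbation, estimate the quadratic form of $A+K$ through the vector $(\|x_1\|,\ldots,\|x_n\|)$, and reduce to the minimum eigenvalue of the finite matrix. The only cosmetic difference is that the paper chooses the block perturbations with tolerance $\varepsilon/n$ so that the estimate yields $A+K\geqslant(\lambda_{min}(M)-\varepsilon)I$ directly (and then invokes Weyl's theorem), bypassing your perturbed matrix $M_\delta$ and the continuity-of-eigenvalues step.
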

\begin{proof}
First, let us show that for any $\varepsilon>0$ there exists a compact self-adjoint operator $K$ such that
\begin{equation}\label{eq:ineq A+compact}
A+K\geqslant (\lambda_{min}(M)-\varepsilon)I
\end{equation}
There exist compact self-adjoint operators $K_{i,i}$, $i=1,\ldots,n$,
and compact operators $K_{i,j}$, $i\neq j$, such that
\begin{enumerate}
\item
$A_{i,i}+K_{i,i}\geqslant(a_i-\varepsilon/n)I$ for $i=1,\ldots,n$;
\item
$\|A_{i,j}+K_{i,j}\|\leqslant a_{i,j}+\varepsilon/n$ for $i\neq j$.
\end{enumerate}
Clearly, we can assume that $K_{j,i}=K_{i,j}^*$ for $i\neq j$.
Set
\begin{equation*}
K=(K_{i,j}\mid i,j=1,\ldots,n).
\end{equation*}
For any $v=(v_1,\ldots,v_n)\in\mathcal{H}_1\oplus\ldots\oplus\mathcal{H}_n$ we have
\begin{align*}
&\langle (A+K)v,v\rangle=\sum_{i=1}^n\langle (A_{i,i}+K_{i,i})v_i,v_i\rangle+2\sum_{i<j}Re\langle (A_{i,j}+K_{i,j})v_j,v_i\rangle\geqslant\\
&\geqslant\sum_{i=1}^n(a_i-\varepsilon/n)\|v_i\|^2-2\sum_{i<j}(a_{i,j}+\varepsilon/n)\|v_j\|\|v_i\|\geqslant\\
&\geqslant\langle M(\|v_1\|,\ldots,\|v_n\|)^T,(\|v_1\|,\ldots,\|v_n\|)^T\rangle-\varepsilon\sum_{i=1}^n\|v_i\|^2\geqslant\\
&\geqslant (\lambda_{min}(M)-\varepsilon)\|v\|^2
\end{align*}
It follows~\eqref{eq:ineq A+compact}.

Now we are ready to prove the assertion of the lemma.
Consider any $\varepsilon>0$.
There exists a compact self-adjoint operator $K$ such that~\eqref{eq:ineq A+compact} holds.
By the Weyl theorem, $\sigma_e(A)=\sigma_e(A+K)\subset[\lambda_{min}(M)-\varepsilon,+\infty)$.
Since $\varepsilon>0$ is arbitrary, we conclude that $\sigma_e(A)\subset[\lambda_{min}(M),+\infty)$.
\end{proof}

\section{Proof of the Main Theorem}\label{S:proof main theorem}

Set $\mathcal{K}_i=\mathcal{H}_i\ominus Ker(A_i)$, $i=1,\ldots,n$.
Define an operator $\Gamma:\mathcal{K}_1\oplus\ldots\oplus\mathcal{K}_n\to\mathcal{H}$ by
\begin{equation*}
\Gamma(x_1,\ldots,x_n)=\sum_{i=1}^n A_i x_i,\quad x_i\in\mathcal{K}_i,\quad i=1,\ldots,n.
\end{equation*}
Then $\Gamma^*:\mathcal{H}\to\mathcal{K}_1\oplus\ldots\oplus\mathcal{K}_n$ and
\begin{equation*}
\Gamma^*x=(A_1^*x,\ldots,A_n^*x),\quad x\in\mathcal{H}.
\end{equation*}
Hence, $\Gamma\Gamma^*=\sum_{i=1}^n A_i A_i^*$.
Consider the operator
\begin{equation*}
G=\Gamma^*\Gamma:\mathcal{K}_1\oplus\ldots\oplus\mathcal{K}_n\to\mathcal{K}_1\oplus\ldots\oplus\mathcal{K}_n.
\end{equation*}
Its block decomposition is the following:
\begin{equation*}
G=(A_i^* A_j\upharpoonright_{\mathcal{K}_j}:\mathcal{K}_j\to\mathcal{K}_i\mid i,j=1,\ldots,n).
\end{equation*}
By Lemma~\ref{L:ess spectrum block operator} and the positive definiteness of $M$, we have $0\notin\sigma_e(G)$.

Let us show that $Ran(A_1),\ldots,Ran(A_n)$ are essentially linearly independent.
Since $0\notin\sigma_e(G)$, we conclude that $Ker(G)$ is finite dimensional.
We have $Ker(G)=Ker(\Gamma^*\Gamma)=Ker(\Gamma)$.
Hence,
\begin{equation*}
Ker(\Gamma)=\{(x_1,\ldots,x_n)\in\mathcal{K}_1\oplus\ldots\oplus\mathcal{K}_n\mid\sum_{i=1}^n A_i x_i=0\}
\end{equation*}
is finite dimensional.
It follows that $Ran(A_1),\ldots,Ran(A_n)$ are essentially linearly independent.

Let us show that $\sum_{i=1}^n Ran(A_i)$ is closed.
Since $0\notin\sigma_e(G)$, we conclude that $\sigma(G)\cap(0,\varepsilon)=\varnothing$ for some $\varepsilon>0$.
Since $\sigma(\Gamma\Gamma^*)\setminus\{0\}=\sigma(\Gamma^*\Gamma)\setminus\{0\}$, we see that
$\sigma(\sum_{i=1}^n A_i A_i^*)\cap(0,\varepsilon)=\varnothing$.
By Lemma~\ref{L:closed sum operator ranges}, $\sum_{i=1}^n Ran(A_i)$ is closed.

The proof is complete.

\end{document}